\definecolor{blackred}{RGB}{183, 24, 82}
\definecolor{lgreen}{rgb}{0.0, 0.48, 0.0}
\definecolor{lpurple}{rgb}{0.48, 0.0, 0.48}
\definecolor{bblue}{rgb}{0.2, 0.4, 0.8}
\renewcommand{\tocsection}[3]{%
  \indentlabel{\@ifnotempty{#2}{\bfseries\ignorespaces#1 #2\quad}}\bfseries#3}
\renewcommand{\tocsubsection}[3]{%
  \indentlabel{\@ifnotempty{#2}{\ignorespaces#1 #2\quad}}#3}
\Crefname{ALC@unique}{Line}{Lines}
\def\Autoref#1{%
  \begingroup
  \edef\reserved@a{\cpttrimspaces{#1}}%
  \ifcsndefTF{r@#1}{%
    \xaftercsname{\expandafter\testreftype\@fourthoffive}
      {r@\reserved@a}.\\{#1}%
  }{%
    \ref{#1}%
  }%
  \endgroup
}
\def\testreftype#1.#2\\#3{%
  \ifcsndefTF{#1autorefname}{%
    \def\reserved@a##1##2\@nil{%
      \uppercase{\def\ref@name{##1}}%
      \csn@edef{#1autorefname}{\ref@name##2}%
      \autoref{#3}%
    }%
    \reserved@a#1\@nil
  }{%
    \autoref{#3}%
  }%
}
\newcommand{\seq}[1]{\left(#1\right)}
\newcommand{\idx}[1]{\mbox{\underline{\sf #1}}}
\newcommand{\nand}{\mbox{\sc{nand}}}
\newcommand{\uidx}[1]{^{\langle #1 \rangle}}
\def\vec{\boldsymbol}
\DeclareMathOperator*{\ar}{\rm ar}
\DeclareMathOperator*{\zf}{\textsc{zf}}
\DeclareMathOperator*{\zfc}{\textsc{zfc}}
\DeclareMathOperator*{\consistent}{\textsc{consistent}}
\definecolor{lgreen}{rgb}{0.0, 0.48, 0.0}
\definecolor{lpurple}{rgb}{0.48, 0.0, 0.48}
\definecolor{bblue}{rgb}{0.2, 0.4, 0.8}
\definecolor{bblue}{rgb}{0.2, 0.4, 0.8}
\definecolor{bgreen}{rgb}{0.2, 0.6, 0.4}
\definecolor{bred}{rgb}{0.8, 0.4, 0.2}
\definecolor{bviolet}{rgb}{0.7, 0.2, 0.7}
\definecolor{blackred}{rgb}{0.6, 0.3, 0.3}
\definecolor{blackblue}{rgb}{0.3, 0.3, 0.6}
\tikzset{
  treenode/.style = {align=center, inner sep=0pt, text centered,
    font=\sffamily},
  arn_nn/.style = {treenode, circle, bblue, draw=bblue,
    fill=bblue!10,
    minimum width=0.5em, minimum height=0.5em
},
  arn_n/.style = {treenode, circle, bblue, draw=bblue,
    text width=1.5em, very thick,
    fill=bblue!10},
  arn_g/.style = {treenode, circle, bgreen, draw=bgreen,
    minimum width=0.5em, minimum height=0.5em,
    fill=bblue!10},
  arn_r/.style = {treenode, circle, bred, draw=bred,
    minimum width=0.5em, minimum height=0.5em,
    fill=bviolet!10},
  arn_x/.style = {treenode, triangle, draw=black,
    minimum width=0.5em, minimum height=0.5em},
  triangle/.style = {treenode, bred, draw=bred, fill=bred!20, regular polygon, regular polygon
    sides=3, very thick, text width=1.5em },
  triangle_b/.style = {treenode, bblue, draw=bblue,
    fill=bblue!20, regular polygon, regular polygon
    sides=3, very thick, text width=1.5em },
  triangle_g/.style = {treenode, bgreen, draw=bgreen,
    fill=bgreen!20, regular polygon, regular polygon
    sides=3, very thick, text width=1.5em },
  triangle_v/.style = {treenode, bviolet, draw=bviolet,
    fill=bviolet!20, regular polygon, regular polygon
    sides=3, very thick, text width=1.5em },
  triangle_h/.style = {treenode, bblue, draw=bblue,
    fill=gray!20, regular polygon, regular polygon
    sides=3, very thick, text width=1.5em },
  arn_e/.style = {treenode, blackblue, draw=blackblue,
    fill=bblue!10, circle,
    very thick, text width=1.5em },
  arn_w/.style = {treenode, black, draw=black,
    fill=white, circle,
    densely dashed, thick, text width=1.5em }
}
\DeclareMathAlphabet\mathbfcal{OMS}{cmsy}{b}{n}
\newcommand{\mynewtheorem}[2]{
  \newaliascnt{#1}{dummy}
  \newtheorem{#1}[#1]{#2}
  \aliascntresetthe{#1}
  \expandafter\def\csname #1autorefname\endcsname{#2}
}
\theoremstyle{definition}
\theoremstyle{definition}
\newtheorem*{problem*}{Problem}
\newtheorem*{conjecture*}{Conjecture}
\begin{document}

\title{A note on the asymptotic expressiveness of ZF and ZFC}
\author{Maciej Bendkowski}
\email{maciej.bendkowski@gmail.com}
\date{\today}
\thanks{The author was supported by the Polish  National  Science  Center  grant
2018/31/B/ST6/01294.}

\maketitle

\begin{abstract}
We investigate the asymptotic densities of theorems provable in Zermelo-Fraenkel
set theory $\zf$ and its extension $\zfc$ including the axiom of choice.
Assuming a canonical De Bruijn representation of formulae, we construct
asymptotically large sets of sentences unprovable within $\zf$, yet provable in
$\zfc$. Furthermore, we link the asymptotic density of $\zfc$ theorems with the
provable consistency of $\zfc$ itself. Consequently, if $\zfc$ is consistent, it is
not possible to refute the existence of the asymptotic density of $\zfc$
theorems within $\zfc$. Both these results address a recent question by Zaionc
regarding the asymptotic equivalence of $\zf$ and $\zfc$.
\end{abstract}

\section{Introduction}
In the current paper we are interested in the \emph{asymptotic expressiveness}
of first-order set theories $\zf$ and $\zfc$. More specifically, we investigate
the asymptotic density of sentences provable within these theories among all
sentences expressible in the first-order language $\mathcal{L}$ consisting of a
single binary \emph{membership} predicate $(\in)$ and no function symbols.

We start with the following problem posted recently by Zaionc.

\begin{problem*}
 Consider the theories $\zf$ and $\zfc$. What is the asymptotic density of
theorems provable within $\zfc$? Is it true that $\zfc$ is asymptotically more
expressive than $\zf$?
\end{problem*}

To make the notion of asymptotic density of theorems sound, we have to assign to
each formula $\varphi$ an integer \emph{size} $|\varphi|$ in such a way that
there exists a finite number of formulae of any given size. Having such a size
notion, we then define the
\emph{asymptotic expressiveness} of a theory $\mathcal{T}$ as the
\emph{asymptotic density} $\mu(\mathcal{T})$ of its theorems among all possible
sentences, \emph{i.e.}
\begin{equation}
 \mu(\mathcal{T}) = \lim_{n\to\infty} \frac{|\{\varphi \colon |\varphi| = n \land
\mathcal{T} \vdash \varphi \}|}{|\{\varphi \colon |\varphi| = n\}|}.
\end{equation}

In order to start addressing the above problem, we have to establish a formal
framework in which we fix certain technical, yet important details, such as the
assumed size model of formulae, or their specific combinatorial representation.
In this paper we choose to represent formulae using De~Bruijn
indices~\cite{DEBRUIJN1972381} instead of the usual notation involving named
variables. Within this setup our contributions are twofold.

Firstly, we show that $\zf$ and $\zfc$ cannot share the same asymptotic
expressiveness. Specifically, we construct an asymptotically large
(\emph{i.e.}~having positive asymptotic density) fraction of
$\mathcal{L}$\nobreakdash-sentences which, though provable in $\zfc$, cannot be proven in
the weaker system $\zf$ without the axiom of choice.
Secondly, we show that it is not possible to refute the existence of
$\mu(\zfc)$ within $\zfc$ itself. For that purpose we
link the provable existence of $\mu(\zfc)$ with the provable consistency of $\zfc$.
In light of Gödel's second incompleteness theorem, the existence of
$\mu(\zfc)$ becomes unprovable within $\zfc$.

We base our analysis on a mixture of methods from analytic combinatorics and, more
specifically, recent advances in its application in the quantitative analysis of
$\lambda$\nobreakdash-terms~\cite{benboddov2019,BODINI201845,flajolet09}.

\section{Analysis}
\subsection{Formulae representation}
Let $V$ be an infinite, denumerable set of De~Bruijn indices $\idx{0}, \idx{1},
\idx{2}, \ldots$, and $\mathcal{F}$ be a finite, functionally complete set of
proposition connectives, \emph{e.g.}~$\{\land,\lor,\neg\}$. Then, we define the
set of $\mathcal{L}$\nobreakdash-formulae $\Phi$ inductively as follows:
\begin{itemize}
\item If $\idx{n}, \idx{m} \in V$ are two indices, then $(\idx{n} \in \idx{m})$
  is a formula in $\Phi$;
\item If $\varphi_1,\ldots,\varphi_n \in \Phi$ and $\circ \in \mathcal{F}$ is an
$n$\nobreakdash-ary connective, then $\circ(\varphi_1,\ldots,\varphi_n) \in
\Phi$;
\item If $\varphi \in \Phi$, then both $(\forall \varphi)$ and $(\exists
  \varphi)$ are formulas in $\Phi$.
\end{itemize}

Recall that in the De~Bruijn notation, we replace named variables with
\emph{indices} denoting the relative distance between the represented variable
occurrence and its binding quantifier. For instance, in the De~Bruijn notation
the \emph{empty set axiom} $\exists x~\forall y~(y \not\in x)$ becomes
$\exists~\forall~(\idx{0} \not\in \idx{1})$. The index $\idx{0}$ denotes a
variable bound by the nearest quantifier, \emph{i.e.}~$\forall$. Likewise,
$\idx{1}$ denotes a variable bound by the second nearest quantifier,
\emph{i.e.}~$\exists$. In general, the index $\idx{n}$ represents a variable
occurrence which is bound by the $(n+1)$st quantifier on the path between the
index and the top node of the corresponding expression tree. If no such
quantifier exists, the index occurs \emph{free}.

\begin{remark}
  De~Bruijn introduced integer indices to facilitate the automatic manipulation
of $\lambda$\nobreakdash-terms~\cite{DEBRUIJN1972381}. From our point of view,
adopting his notation to first-order formulae presents a few important
advantages. Most notably, each sentence admits one, canonical representative.
Consequently, we do not have to concern ourselves with counting formulae
\emph{up to $\alpha$\nobreakdash-equivalence}, \emph{i.e.}~up to bound variable
names. For instance, formulae $\exists x~\forall y~(y \not\in x)$ and $\exists
y~\forall z~(z \not\in y)$ admit the \emph{same} representation
$\exists~\forall~(\idx{0} \not\in \idx{1})$. Further advantages of the De~Bruijn
notation will become clear once we start a more detailed quantitative analysis
of formulae.
\end{remark}

\subsection{Size model}
The set $\Phi$ of formulae we consider can be neatly encapsulated in the following, more
symbolic specification:
\begin{align}\label{eq:spec}
  \Phi_\infty &:=~V \in V~|~\forall \Phi_\infty~|~\exists \Phi_\infty
        ~|~\bigcup_{(\circ) \in \mathcal{F}} \circ(\vec{\Phi}_\infty).
\end{align}
Here $V$ denotes the class of De~Bruijn indices whereas the boldface notation
$\vec{\Phi}_\infty$ denotes vectors of lengths matching the arities of respective
connectives. So, for instance, if $\mathcal{F} = \{ \nand \}$ then the final 
alternative in~\eqref{eq:spec} becomes $\nand (\Phi_\infty, \Phi_\infty)$.

Given such a general combinatorial specification of formulae, we assume a
\emph{unary} representation of indices. In other words, we represent the index
$\idx{n}$ as an $n$\nobreakdash-fold successor of zero, \emph{i.e.}~$S^{(n)} 0$.
In doing so, the class $V$ admits a simple, recursive definition:
\begin{align}\label{eq:spec:idx}
 V &:= 0~|~S V. 
\end{align}

Note that combined,~\eqref{eq:spec} and~\eqref{eq:spec:idx} constitute a simple
algebraic specification of formulae. To complete the above size model, we assume
a so-called \emph{natural} size model, \emph{cf.}~\cite{10.1093/logcom/exx018}, which adheres
with the previously mentioned finiteness condition. In this model, each
constructor, \emph{i.e.}~successor $S$, zero $0$, membership predicate $(\in)$,
quantifiers $\forall$, $\exists$, and connectives $(\circ)$, is assigned
\emph{weight} one. The \emph{size} of a formula becomes then the total
weight of all constructors it is built from. So, for instance,
$\exists~\forall~(\idx{0} \not\in \idx{1})$ has size $7$ as it consists of seven
constructors. Note that $\not\in$ is a shorthand for two constructors $(\in)$
and $(\neg)$, and that $\idx{1} \equiv S~0$ consists of two constructors.

\begin{remark}
 We choose the natural size notion for technical simplicity. Let us mention that
our analysis extends onto more sophisticated weighing systems, though it is specific
to the unary representation of indices and does not immediately apply to other
representations, \emph{e.g.}~involving a more compact binary encoding of indices, or
counting formulae with named variables up to $\alpha$\nobreakdash-equivalence.
\end{remark}

\subsection{Counting formulae}
Given the algebraic specification~\eqref{eq:spec} of formulae, we can easily
lift it onto the level of generating functions using so-called symbolic methods,
see~\cite[Part A. Symbolic Methods]{flajolet09}. Let $\Phi_\infty(z) = \sum_{n
\geq 0} a_n z^n$ be the generating function in which the $n$th coefficient $a_n
= [z^n]\Phi_\infty(z)$ denotes the number of formulae of size $n$.

Then, based on~\eqref{eq:spec} $\Phi_\infty(z)$ satisfies the relation
\begin{align}\label{eq:spec:2}
  \Phi_\infty(z) &= z {\left( \frac{z}{1-z} \right)}^2 +
                   2 z \Phi_\infty(z) +
        \sum_{(\circ) \in \mathcal{F}} z {\Phi_\infty(z)}^{\ar(\circ)}.
\end{align}
Here $\ar(\cdot)$ denotes the the arity of the respective symbol.

\begin{proposition}\label{prop:phi:infty:puiseux}
  The generating function $\Phi_\infty(z)$ admits a Puiseux expansion in form
of
  \begin{equation}\label{eq:phi:puisseux}
    \Phi_\infty(z) = a - b \sqrt{1 - \frac{z}{\rho}} +
    O\left(\big|1-\frac{z}{\rho}\big|\right)
  \end{equation}
  where $0 < \rho < 1$, and $a,b > 0$.
\end{proposition}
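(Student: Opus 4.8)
The plan is to recognise~\eqref{eq:spec:2} as an instance of the \emph{smooth implicit-function schema} of analytic combinatorics (\cite[Section VII.4]{flajolet09}) and to verify its hypotheses. First I would rewrite~\eqref{eq:spec:2} as a fixed-point equation $\Phi_\infty(z) = G(z,\Phi_\infty(z))$ with
\begin{equation*}
  G(z,w) \;=\; \frac{z^3}{(1-z)^2} \;+\; 2zw \;+\; \sum_{(\circ)\in\mathcal{F}} z\,w^{\ar(\circ)},
\end{equation*}
and check the ``easy'' conditions: as a bivariate series $G$ has non-negative coefficients, vanishes at $(0,0)$, satisfies $G(0,w)=0$ (so the coefficient of $w$ in $G(0,w)$ is $0\ne 1$), and is non-linear in $w$, since a functionally complete $\mathcal{F}$ must contain some connective $\circ_0$ with $\ar(\circ_0)\ge 2$, contributing the monomial $z\,w^{\ar(\circ_0)}$. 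The usual aperiodicity holds because $\Phi_\infty$ contains formulae of all sufficiently large sizes already among the index-pair formulae $(\idx{n}\in\idx{m})$. Since $F(z,w):=G(z,w)-w$ has $\partial_w F(0,0)=-1\ne 0$, the analytic implicit function theorem yields a unique solution analytic at the origin, which must be $\Phi_\infty$; in particular its radius of convergence $\rho$ is positive.

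The substantial step is pinning down the \emph{critical point}: a pair $(\rho,\tau)$ with $0<\rho<1$ and $\tau>0$ solving the characteristic system $G(\rho,\tau)=\tau$ and $\partial_w G(\rho,\tau)=1$. For each fixed $z\in(0,1)$ the section $w\mapsto G(z,w)$ is strictly increasing and convex on $[0,\infty)$ with $G(z,0)=z^3/(1-z)^2>0$, so $w-G(z,w)$ is concave, negative at $0$, and negative for $w$ large (the term $z\,w^{\ar(\circ_0)}$ dominates), whence the equation $w=G(z,w)$ has a least solution $\tau(z)$ exactly when the convex curve touches the diagonal. For $z\to 0^+$ it does, with $\tau(z)\to 0$ and $\partial_w G(z,0)\to 0<1$; as $z\to 1^-$ one has $G(z,0)\to\infty$, forcing $G(z,w)>w$ for every $w\ge 0$ (for bounded $w$ because of the blow-up of $z^3/(1-z)^2$, for large $w$ because of the superlinear term), so no solution survives. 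Taking $\rho:=\sup\{z\in(0,1):w=G(z,w)\text{ has a solution}\}$, a continuity-and-convexity argument then gives $0<\rho<1$, shows $\tau:=\lim_{z\to\rho^-}\tau(z)$ is finite and positive, and shows that at $z=\rho$ the curve is tangent to the diagonal — which is precisely the characteristic system; finiteness of $\tau$ also follows from $\partial_w G(\rho,\tau)=1$ since $\partial_w G(\rho,w)\to\infty$ as $w\to\infty$.

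With the hypotheses in hand, the schema gives that $\rho$ is the dominant singularity of $\Phi_\infty$ — it is the first point, reached from the origin along the solution, at which $\partial_w F$ vanishes, so the analytic continuation furnished by the implicit function theorem breaks down exactly there — and that near $z=\rho$
\begin{equation*}
  \Phi_\infty(z) \;=\; \tau \;-\; b\sqrt{1-\tfrac{z}{\rho}} \;+\; O\!\left(\big|1-\tfrac{z}{\rho}\big|\right), \qquad b=\sqrt{\frac{2\rho\,\partial_z G(\rho,\tau)}{\partial_w^2 G(\rho,\tau)}}.
\end{equation*}
Setting $a:=\tau$, positivity is then automatic: $a=\tau>0$ by construction, $\partial_z G(\rho,\tau)>0$ since $G$ has non-negative coefficients and genuinely depends on $z$ (evaluated at a positive point), and $\partial_w^2 G(\rho,\tau)>0$ since the monomial $z\,w^{\ar(\circ_0)}$ with $\ar(\circ_0)\ge 2$ contributes a strictly positive amount at $(\rho,\tau)$; hence $b>0$, giving~\eqref{eq:phi:puisseux}.

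If one prefers not to invoke the schema, the same expansion drops out directly from Weierstrass preparation — equivalently a Newton–Puiseux computation — at $(\rho,\tau)$: there $F$ and $\partial_w F$ vanish while $\partial_z F$ and $\partial_w^2 F$ do not, so $F(z,w)=0$ reduces locally to a quadratic in $w-\tau$, and the branch continuing $\Phi_\infty$ from the left is the one carrying the minus sign. Either way, I expect the main obstacle to be the critical-point analysis of the second paragraph, and within it the verification that $\rho<1$, which rests on the blow-up of the index-pair contribution $z^3/(1-z)^2$ as $z\to 1$.
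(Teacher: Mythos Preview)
Your argument is correct and reaches the same conclusion, but by a different route than the paper. The paper invokes the Drmota--Lalley--Woods theorem \cite[Theorem~VII.6]{flajolet09}, treating~\eqref{eq:spec:2} as an instance of an algebraic system and checking the structural hypotheses (non-linearity via functional completeness, positivity, properness via the Jacobian at the origin, aperiodicity) without ever locating the critical point explicitly. You instead cast the single equation in the smooth implicit-function schema \cite[Section~VII.4]{flajolet09} and carry out a direct convexity analysis of $w\mapsto G(z,w)$ to exhibit the tangency point $(\rho,\tau)$. What this buys you is an explicit formula for $b$ and a self-contained verification that $0<\rho<1$: your observation that $G(z,w)\geq z^3/(1-z)^2 + 2zw \geq w + z^3/(1-z)^2 > w$ once $z\geq 1/2$ already forces $\rho\leq 1/2$, a bound the paper leaves implicit. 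The paper's approach is terser and scales to genuine systems, whereas yours is more elementary and more informative for this single-equation case; both are legitimate, and indeed your route sidesteps the mild awkwardness that the rational term $z^3/(1-z)^2$ is not polynomial in $z$, which strictly speaking places~\eqref{eq:spec:2} outside the literal scope of DLW unless one first adjoins $V=z+zV$ as an auxiliary equation.
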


\begin{proof}
  We apply the Drmota--Lalley--Woods theorem, see \emph{e.g.}~\cite[Theorem VII.6]{flajolet09}.

  Recall that the set $\mathcal{F}$ of connectives is functionally complete and
so by Post's theorem it must contain a connective $(\circ)$ of arity $n \geq 2$,
\emph{cf.}~\cite{pelletier1990}. It means that~\eqref{eq:spec:2} is non-linear
in $\Phi_\infty(z)$. By construction, it is also algebraic positive and
algebraic irreducible. Algebraic properness, sometimes referred to as
\emph{well-foundedness}, follows for instance from Joyal and Labelle's implicit
species theorem, \emph{cf.}~\cite{PIVOTEAU20121711}. Indeed, consider the
Jacobian $\mathcal{H}(z, \Phi_\infty)$ associated with $\Phi_\infty$:
\begin{equation}
  \mathcal{H}(z, \Phi_\infty) =
  2z + \sum_{(\circ) \in \mathcal{F}} z \ar(\circ) {\Phi_\infty(z)}^{\ar(\circ) - 1}.
\end{equation}
Note that $\mathcal{H}(0, 0) = 0$ and so, as a $1 \times 1$ matrix, it is
trivially nilpotent. Algebraic aperiodicity is a consequence of the fact that
for all sufficiently large $n$ there exists a formula of size $n$ (for instance
the sole index $\idx{n-1}$). Hence $[z^n]\Phi_\infty(z) \neq 0$.

The generating function $\Phi_\infty(z)$ meets therefore all necessary requirements.
We can apply the Drmota--Lalley-Woods theorem and conclude
that~\eqref{eq:spec:2} has a unique dominant singularity $\rho$ and a suitable
Puiseux expansion of declared form.
\end{proof}

\begin{remark}
  Given the Puiseux expansion of $\Phi_\infty(z)$ we can readily use transfer
  theorems~\cite[Section VI.3]{flajolet09} to obtain an asymptotic estimate for
  $[z^n]\Phi_\infty(z)$ standing for the number of formulae of size $n$:
  \begin{equation}\label{eq:all:estimate}
    [z^n]\Phi_\infty(z) \sim C \cdot \rho^n n^{-3/2}.
  \end{equation}
\end{remark}

  \subsection{Counting sentences}
Since we are interested in the asymptotic density of theorems, we need to
establish asymptotic estimates for the number of \emph{sentences},
\emph{i.e.}~formulae without free indices. We follow a method similar
to~\cite{benboddov2019,BODINI201845} developed for the purpose of counting
closed $\lambda$\nobreakdash-terms in the De~Bruijn representation.

Let us start with introducing \emph{$m$\nobreakdash-open formulae}. Analogously
to $m$\nobreakdash-open $\lambda$\nobreakdash-terms, we call a
formula $\varphi$ $m$\nobreakdash-open if prepending $\varphi$ with $m$ head
quantifiers, be it universal or existential, turns $\varphi$ into a sentence.
For instance, the formula $\left( \forall~\idx{0} \to (\exists~\idx{0} \land
  \idx{2}) \right)$ is
$1$\nobreakdash-open as $\idx{2}$ occurs free, however becomes bound once we
introduce a single head quantifier. Note that by such a definition,
$m$\nobreakdash-open formulae are at the same time $(m+1)$\nobreakdash-open.
In particular, sentences are just $0$\nobreakdash-open (closed) formulae.

By symbolic methods, the definition of $m$\nobreakdash-open formulae gives rise
to an infinite specification involving all the classes of
$m$\nobreakdash-open formulae $\seq{\Phi_m}_{m \geq 0}$:
\begin{align}\label{eq:m-open:spec}
  \begin{split}
    \Phi_0 &:= \forall \Phi_1~|~\exists \Phi_1
    ~|~\bigcup_{(\circ) \in \mathcal{F}} \circ(\vec{\Phi}_0)\\
    \Phi_1 &:= \forall \Phi_2~|~\exists \Phi_2
    ~|~\bigcup_{(\circ) \in \mathcal{F}} \circ(\vec{\Phi}_1)
    ~|~V_{< 1} \in V_{< 1}\\
    \Phi_2 &:= \forall \Phi_3~|~\exists \Phi_3
    ~|~\bigcup_{(\circ) \in \mathcal{F}} \circ(\vec{\Phi}_2)
    ~|~V_{< 2} \in V_{< 2}\\
    &\ldots \\
    \Phi_m &:= \forall \Phi_{m+1}~|~\exists \Phi_{m+1}
    ~|~\bigcup_{(\circ) \in \mathcal{F}} \circ(\vec{\Phi}_m)
    ~|~V_{< m} \in V_{< m}\\
    &\ldots
  \end{split}
\end{align}
In words, a formula $\varphi$ is $m$\nobreakdash-open if it is in form of
$(\forall \tau)$ or $(\exists \tau)$ where $\tau$ is $(m+1)$\nobreakdash-open, or in
form of $\varphi = \circ(\tau_1,\ldots,\tau_n)$ where $\tau_1,\ldots,\tau_n$ are
again $m$\nobreakdash-open, or finally, if $\varphi$ is in form of $(\idx{n} \in
\idx{k})$ where both $\idx{n}$ and $\idx{k}$ are two of the $m$ initial indices
$V_{< m} = \{\idx{0},\ldots,\idx{m-1}\}$.

Note that $\Phi_0 \subsetneq \Phi_1 \subsetneq \cdots \subsetneq \Phi_m
\subsetneq \cdots \subsetneq \Phi_\infty$. Consecutive classes subsume and
extend all the previous ones so $\Phi_m$ resembles more and more $\Phi_\infty$
as $m \to \infty$. Let us formalise this intuition and show that the infinite
$\seq{\Phi_m}_{m \geq 0}$ is a \emph{forward recursive system} in the sense of
the following definition, \emph{cf.}~\cite[Definition 5.5]{benboddov2019}\footnote{The current
definition is a simplified version of~\cite[Definition 5.5]{benboddov2019}. The
original one involves systems of functional equations and permits additional vectors of so-called \emph{marking variables}
which can be used, for instance, to track the behaviour of certain interesting
sub-patterns of random structures.}.

\begin{definition}[Forward recursive systems]\label{definition:infinitely:nested:systems}
  Let $z$ be a formal variable. Consider the infinite sequences
  $\seq{L \uidx m}_{m \geq 0}$ and $\seq{\mathcal K \uidx m}_{m \geq 0}$ of formal power
  series $L \uidx m(z)$ and $\mathcal K \uidx m(\ell_1, \ell_2, z)$.
  Assume that $\seq{L \uidx m}_{m \geq 0}$ and
  $\seq{\mathcal K \uidx m}_{m \geq 0}$ satisfy
  \begin{equation}\label{eq:infinite:system:forward:recursive:system}
    L \uidx m = \mathcal K \uidx m \left(L \uidx m, L \uidx
      {m+1}, z \right).
  \end{equation}
  Then, we say that the
system~\eqref{eq:infinite:system:forward:recursive:system} is \emph{forward
  recursive}.

Furthermore, consider a \emph{limiting system} in form of
\begin{equation}\label{eq:infinite:system:limit:system}
  L \uidx \infty = \mathcal K \uidx \infty \left(L \uidx \infty, L \uidx
    {\infty}, z \right)
\end{equation}
where $L \uidx \infty(z)$ and $\mathcal{K} \uidx \infty(\ell_1, \ell_2, z)$
are formal power series, and moreover
$K \uidx \infty$ is analytic at
$\left(\ell_1, \ell_2, z\right) =
\left(0,0,0\right)$. In this setting, we say that the
system~\eqref{eq:infinite:system:forward:recursive:system}:
    \begin{enumerate}
        \item is \emph{infinitely nested} if
            $\mathcal K \uidx m (\ell_1, \ell_2, z)
        \preceq
        \mathcal K \uidx \infty (\ell_1, \ell_2, z)$
        meaning that for all $n \geq 0$
        \begin{equation*}
          [z^n]\mathcal K \uidx m (\ell_1, \ell_2, z)
          \leq
          [z^n]\mathcal K \uidx \infty (\ell_1, \ell_2, z);
        \end{equation*}
        \item \emph{tends to an irreducible context-free schema} if it is
            infinitely nested and its corresponding limiting
            system~\eqref{eq:infinite:system:limit:system} satisfies the
            premises of the Drmota--Lalley--Woods theorem~\cite[Theorem
            VII.6]{flajolet09}, \emph{i.e.}~is a
            polynomial, non-linear functional equation which is
            algebraic positive, proper, irreducible and aperiodic;
        \item
            is \emph{exponentially converging} if
            there exists a formal power series 
            $A(z)$ and \( B(z) \) such that
        \begin{equation}
            \mathcal K \uidx \infty
            (L \uidx \infty, L \uidx \infty, z)
            -
            \mathcal K \uidx m
            (L \uidx \infty, L \uidx \infty, z)
            \preceq
            A(z) \cdot {B(z)}^m,
          \end{equation}
     and both $A(z)$
    and \( B(z) \) are
        analytic in the disk
        \( |z| < \rho + \varepsilon \) for some \( \varepsilon > 0 \)
        where \( \rho \) is the dominant singularity of the
        limit system~\eqref{eq:infinite:system:limit:system}.
        Moreover, we have \( \left|B(\rho)\right| < 1 \).
    \end{enumerate}
 \end{definition}
    
\begin{proposition}
  The infinite system $\seq{\Phi_m}_{m \geq 0}$ is an infinitely
  nested, forward recursive system which tends to the irreducible context-free
  schema $\Phi_\infty$ at an exponential convergence rate.
\end{proposition}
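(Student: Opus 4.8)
The plan is to verify the three conditions of Definition~\ref{definition:infinitely:nested:systems} one at a time for the system~\eqref{eq:m-open:spec}, against the limiting system~\eqref{eq:spec:2} (that is, $\Phi_\infty$). First I would translate~\eqref{eq:m-open:spec} into generating functions: writing $\Phi_m(z)$ for the counting series of $m$-open formulae, the specification gives
\begin{equation*}
  \Phi_m(z) = 2z\,\Phi_{m+1}(z) + \sum_{(\circ)\in\mathcal F} z\,\Phi_m(z)^{\ar(\circ)} + z\left(\frac{1-z^m}{1-z}\right)^2,
\end{equation*}
where the last summand counts atoms $(\idx n \in \idx k)$ with $\idx n, \idx k \in V_{<m}$, each index $\idx j$ ($0\le j<m$) contributing $z^{j+1}$ so that $V_{<m}(z) = z(1-z^m)/(1-z)$. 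This already exhibits the system as forward recursive in the sense of~\eqref{eq:infinite:system:forward:recursive:system}, with $\mathcal K\uidx m(\ell_1,\ell_2,z) = 2z\ell_2 + \sum_{(\circ)} z\,\ell_1^{\ar(\circ)} + z\big((1-z^m)/(1-z)\big)^2$ and $\mathcal K\uidx\infty(\ell_1,\ell_2,z) = 2z\ell_2 + \sum_{(\circ)} z\,\ell_1^{\ar(\circ)} + z^3/(1-z)^2$, the latter being exactly the functional equation~\eqref{eq:spec:2} for $\Phi_\infty$ once $\ell_1=\ell_2=\Phi_\infty$.

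Next, infinite nestedness: since $0 \preceq (1-z^m)^2 \preceq 1$ coefficientwise (all coefficients of $(1-z^m)^2 = 1 - 2z^m + z^{2m}$ lie in $\{-2,-1,0,1\}$ and, more to the point, $z(1-z^m)^2/(1-z)^2 = V_{<m}(z)^2 \preceq V(z)^2 = z^3/(1-z)^2$ because $V_{<m}$ is a sub-series of $V$ with non-negative coefficients), and the remaining terms of $\mathcal K\uidx m$ and $\mathcal K\uidx\infty$ coincide, we get $[z^n]\mathcal K\uidx m \le [z^n]\mathcal K\uidx\infty$ for all $n$. Then, the limiting-system hypotheses: these are precisely the properties already checked in the proof of Proposition~\ref{prop:phi:infty:puiseux} --- $\mathcal K\uidx\infty$ is a polynomial in $\ell_1,\ell_2$ with coefficients that are rational (analytic near $0$) in $z$, it is non-linear because $\mathcal F$ contains a connective of arity $\ge 2$ by Post's theorem, and it is algebraic positive, proper, irreducible and aperiodic by the same arguments given there. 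I would simply cite that proof.

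The main work --- and the step I expect to be the real obstacle --- is exponential convergence. Here I must produce $A(z), B(z)$, analytic in a disk $|z| < \rho + \varepsilon$ with $|B(\rho)| < 1$, such that
\begin{equation*}
  \mathcal K\uidx\infty(\Phi_\infty,\Phi_\infty,z) - \mathcal K\uidx m(\Phi_\infty,\Phi_\infty,z) \preceq A(z)\,B(z)^m.
\end{equation*}
The left-hand side is just the difference of the atom terms, $z\,V(z)^2 - z\,V_{<m}(z)^2$. Writing $V(z) - V_{<m}(z) = \sum_{j\ge m} z^{j+1} = z^{m+1}/(1-z)$, we get $z(V^2 - V_{<m}^2) = z\,(V - V_{<m})(V + V_{<m}) \preceq z \cdot \dfrac{z^{m+1}}{1-z} \cdot \dfrac{2z}{1-z} = \dfrac{2z^{m+3}}{(1-z)^2}$, using $V + V_{<m} \preceq 2V = 2z/(1-z)$. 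Hence one may take $B(z) = z$ and $A(z) = 2z^3/(1-z)^2$: both are analytic for $|z| < 1$, hence in $|z| < \rho + \varepsilon$ since $\rho < 1$, and $|B(\rho)| = \rho < 1$. This gives all three conditions. The only delicate point to get right is that the bound must hold coefficientwise with the stated analyticity, so I would be careful that the geometric-type majorant $z^{m+1}/(1-z)$ for the index tail is genuinely a coefficientwise upper bound (it is, with equality) and that no cancellation in $V^2 - V_{<m}^2$ is being exploited --- the factorisation into $(V-V_{<m})(V+V_{<m})$ keeps everything with non-negative coefficients, so $\preceq$ is legitimate throughout.
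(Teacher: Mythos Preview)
Your proof is correct and mirrors the paper's: same functional equation for $\Phi_m$, same citation of the $\Phi_\infty$ analysis for the limiting-schema conditions, and the same exponential-convergence bound with $A(z)=2z^3/(1-z)^2$, $B(z)=z$ (your factorisation $(V-V_{<m})(V+V_{<m})$ is just an alternate route to the paper's direct expansion $\tfrac{z^3}{(1-z)^2}(2z^m-z^{2m})$). One cosmetic slip: the atom term in your displayed $\Phi_m$-equation should be $z\bigl(z(1-z^m)/(1-z)\bigr)^2$, matching your own correct formula $V_{<m}(z)=z(1-z^m)/(1-z)$; the missing $z^2$ does not propagate, since your later computations work with $V$ and $V_{<m}$ directly.
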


\begin{proof}
  Let us unpack the all the definitions one at a time. First, since $\Phi_m$
involves $\Phi_{m+1}$ in its specification~\eqref{eq:m-open:spec}, note that the
infinite system $\seq{\Phi_m}_{m \geq 0}$ fits the definition of a forward
recursive system assuming $\Phi_\infty$ as its limiting system.

  Next, let us consider $\Phi_m$ and the limiting system $\Phi_\infty$. Note that each
$m$\nobreakdash-open formula is accounted for in $[z^n]\Phi_m(z)$ and in
$[z^n]\Phi_\infty(z)$. Hence $\Phi_m(z) \preceq \Phi_\infty(z)$ and
$\seq{\Phi_m}_{m \geq 0}$ is indeed infinitely nested. By the arguments
presented in the proof of~\cref{prop:phi:infty:puiseux} it is at the same time
tending to the irreducible context-free schema $\Phi_\infty$.

Finally, note that by~\eqref{eq:m-open:spec} $\Phi_m(z)$ satisfies the equation
\begin{equation}\label{eq:spec:m}
  \Phi_m(z) =
  z \left(\frac{z\left(1-z^m\right)}{1-z}\right)^2 +
  2 z \Phi_{m+1}(z) +
  \sum_{(\circ) \in \mathcal{F}} z {\Phi_m(z)}^{\ar(\circ)}.
\end{equation}
Therefore
\begin{align}
  \begin{split}
    \Phi_\infty(\Phi_\infty,\Phi_\infty,z) -
    \Phi_m(\Phi_\infty,\Phi_\infty,z) &=
    z {\left( \frac{z}{1-z} \right)}^2 - 
    z \left(\frac{z\left(1-z^m\right)}{1-z}\right)^2\\
    &= \frac{z^3}{\left(1-z\right)^2}\left(2 z^m - z^{2m}\right) \preceq \frac{2 z^3}{\left(1-z\right)^2} z^{m}.
  \end{split}
\end{align}
In this form, it is clear that $\seq{\Phi_m(z)}_{m \geq 0}$ is exponentially
converging.
\end{proof}

Having established the behaviour of $\seq{\Phi_m}_{m \geq 0}$
we are ready to apply the following general result~\cite[Theorem
5.9]{benboddov2019} tailored here so to fit our specific application.

\begin{theorem}
    Let $\mathcal{S}$ be an infinitely nested, forward recursive
    system which tends to an
    irreducible context-free schema at an exponential convergence rate.  Then,
    the respective solutions \( L\uidx m (z) \) of $\mathcal{S}$ admit
    for each \( m \geq 0 \) an asymptotic expansion of their coefficients
    as \( n \to \infty \) in form of
    \begin{equation}\label{eq:prop:infinite:sys:puiseux}
    [z^n]L \uidx m(z) \sim [z^n] \sum_{k \geq 0} c_{k} \uidx m \left(
        1 - \dfrac{z}{\rho}
    \right)^{k/2}
\end{equation}
where \( \rho \) is the dominant singularity of the corresponding
limiting system~\eqref{eq:infinite:system:limit:system}.
\end{theorem}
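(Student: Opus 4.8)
The plan is to reproduce the proof of \cite[Theorem 5.9]{benboddov2019} in the simplified setting at hand, so the argument is essentially a bookkeeping exercise around the three hypotheses --- infinitely nested, tending to an irreducible context-free schema, and exponentially converging --- which together force each $L\uidx m$ to inherit the square-root singular behaviour of the limit $L\uidx\infty$. First I would record what the limit gives us: by the Drmota--Lalley--Woods hypothesis the limiting system \eqref{eq:infinite:system:limit:system} has a unique dominant singularity $\rho$ and $L\uidx\infty(z)$ admits a Puiseux expansion $L\uidx\infty(z) = \sum_{k\ge 0} c_k\uidx\infty (1-z/\rho)^{k/2}$ convergent in a slit neighbourhood of $\rho$; this is exactly \cref{prop:phi:infty:puiseux} applied to $\Phi_\infty$, and it also pins down $c_0\uidx\infty = a$ and $c_1\uidx\infty = -b$.

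Next I would show that every $L\uidx m$ is analytic at least up to $\rho$ and has no singularity strictly inside $|z|<\rho$. Infinite nestedness gives $L\uidx m(z)\preceq L\uidx\infty(z)$ coefficientwise, hence the radius of convergence $\rho_m$ of $L\uidx m$ satisfies $\rho_m\ge\rho$. For the reverse inequality one argues that $L\uidx m$ dominates a ``truncated'' subsystem that already has radius of convergence $\rho$ (every sufficiently large coefficient of $L\uidx\infty$ is matched, up to the exponentially small defect controlled by the convergence hypothesis), so $\rho_m=\rho$ for all $m$. The key quantitative input is the exponential convergence bound: writing $\Delta_m(z) := \mathcal K\uidx\infty(L\uidx\infty,L\uidx\infty,z) - \mathcal K\uidx m(L\uidx\infty,L\uidx\infty,z)\preceq A(z)B(z)^m$ with $A,B$ analytic in $|z|<\rho+\varepsilon$ and $|B(\rho)|<1$, we get that $L\uidx\infty$ is an approximate fixed point of the $m$-th equation with error that is analytic past $\rho$ and geometrically small in $m$.

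The heart of the proof is then a fixed-point/bootstrap argument near $z=\rho$. Set $E_m(z) := L\uidx\infty(z) - L\uidx m(z)$. Subtracting $L\uidx m = \mathcal K\uidx m(L\uidx m, L\uidx{m+1}, z)$ from $L\uidx\infty = \mathcal K\uidx\infty(L\uidx\infty, L\uidx\infty, z)$ and doing a first-order Taylor expansion of $\mathcal K\uidx m$ in its first two arguments around $(L\uidx\infty, L\uidx\infty)$ yields a recurrence of the shape
\begin{equation}
  E_m = \partial_{\ell_1}\mathcal K\uidx m\big|_{\ast}\, E_m + \partial_{\ell_2}\mathcal K\uidx m\big|_{\ast}\, E_{m+1} + \Delta_m + (\text{higher order in }E_m, E_{m+1}),
\end{equation}
so that, after absorbing the $\partial_{\ell_1}$ term (its coefficient is $<1$ in the relevant disk, exactly as in the DLW setting where the Jacobian has spectral radius below $1$ off the singularity), $E_m\preceq A'(z)\sum_{j\ge 0}\big(\prod \partial_{\ell_2}\big)^{j}\Delta_{m+j}$ with all the series involved analytic in a fixed disk $|z|<\rho+\varepsilon'$. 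Hence $E_m(z)$ is analytic strictly beyond $\rho$, which means $L\uidx m(z) = L\uidx\infty(z) - E_m(z)$ has precisely the same dominant singularity $\rho$ and the same singular expansion as $L\uidx\infty$ up to an error term holomorphic at $\rho$; concretely $L\uidx m(z) = \sum_{k\ge 0} c_k\uidx m (1-z/\rho)^{k/2}$ with $c_k\uidx m = c_k\uidx\infty$ for the half-integer (genuinely singular) powers and the analytic part perturbed by $-E_m$. Applying the standard transfer theorems \cite[Section VI.3]{flajolet09} to this expansion gives \eqref{eq:prop:infinite:sys:puiseux}.

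The main obstacle I anticipate is the bootstrap controlling $E_m$ near $z=\rho$: one must simultaneously handle the infinite coupling $E_m\leftrightarrow E_{m+1}$ and the fact that the naive linearisation coefficient $\partial_{\ell_1}\mathcal K\uidx\infty$ hits value $1$ exactly at the singularity $\rho$, which is what creates the square-root branch point in the first place. The clean way around this is to not linearise at $\rho$ directly but to work in a complex neighbourhood where $L\uidx\infty$ is already known (from the DLW analysis of the limit) to be given by its convergent Puiseux series, treat $L\uidx m$ as a perturbation, and show the perturbation series converges there --- i.e. one quarantines the singular behaviour inside $L\uidx\infty$ and only ever proves \emph{analyticity} (not singular estimates) for the differences $E_m$. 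Since all of this machinery is exactly what \cite[Theorem 5.9]{benboddov2019} packages, in the write-up I would state the specialisation, verify that the three structural hypotheses have been checked in the preceding proposition, and invoke the cited theorem, spelling out only the identification of $\rho$ with the dominant singularity of $\Phi_\infty$ from \cref{prop:phi:infty:puiseux}.
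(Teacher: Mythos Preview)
The paper does not prove this theorem at all: it is quoted verbatim as \cite[Theorem~5.9]{benboddov2019} and used as a black box. Your closing plan --- check the three structural hypotheses (done in the preceding proposition) and then invoke the cited result --- is therefore exactly what the paper does, and for the write-up nothing more is needed.

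That said, the sketch you give of \emph{how} the cited proof should go contains a real error worth flagging. You claim that the differences $E_m = L\uidx\infty - L\uidx m$ are analytic in a disk strictly larger than $|z|<\rho$, so that all the singular (half-integer) Puiseux coefficients of $L\uidx m$ coincide with those of $L\uidx\infty$. This is false in the very application at hand: \cref{prop:sentence:counting} and the remark following it record that $[z^n]\Phi_m(z)\sim C_m\,\rho^{-n}n^{-3/2}$ with constants $C_m$ that genuinely depend on $m$ and differ from the constant $C$ for $\Phi_\infty$. Distinct leading constants mean distinct coefficients $c_1\uidx m \neq c_1\uidx\infty$ of $(1-z/\rho)^{1/2}$, so $E_m$ itself carries a square-root singularity at $\rho$ and cannot be analytic there. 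The mechanism you identify --- that $\partial_{\ell_1}\mathcal K\uidx\infty$ equals $1$ precisely at $(\rho, L\uidx\infty(\rho))$ --- is exactly why the linearised recursion for $E_m$ does \emph{not} kill the singular part; solving $E_m = \partial_{\ell_1}\mathcal K\,E_m + (\cdots)$ near that point divides by something vanishing like $\sqrt{1-z/\rho}$, which injects, rather than removes, a branch point. The actual argument in \cite{benboddov2019} works instead by propagating Puiseux expansions (not analyticity) through the infinite system and controlling the resulting coefficient recursions; if you want to reproduce rather than cite the proof, that is the layer you would need to redo.
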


As for general formulae, a direct application of transfer theorems gives us the
following asymptotic estimate for the number of $m$\nobreakdash-open formulae.

\begin{proposition}\label{prop:sentence:counting}
  The number $[z^n]\Phi_m(z)$ of $m$\nobreakdash-open formulae of size $n$
  satisfies the following asymptotic estimate:
    \begin{equation}\label{eq:mopen:estimate}
        [z^n]\Phi_m(z) \sim C_m \cdot \rho^n n^{-3/2}.
      \end{equation}
\end{proposition}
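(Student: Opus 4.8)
The plan is to derive \eqref{eq:mopen:estimate} from the previously established theorem on infinitely nested, forward recursive systems together with the singularity analysis machinery of \cite[Section VI.3]{flajolet09}. Since the proposition just before this one shows that $\seq{\Phi_m}_{m \geq 0}$ is an infinitely nested, forward recursive system tending to the irreducible context-free schema $\Phi_\infty$ at an exponential convergence rate, the cited theorem applies verbatim and yields, for each fixed $m \geq 0$, the Puiseux-type expansion
\begin{equation*}
  [z^n]\Phi_m(z) \sim [z^n] \sum_{k \geq 0} c_k\uidx{m}\left(1 - \frac{z}{\rho}\right)^{k/2},
\end{equation*}
where $\rho$ is the dominant singularity of the limiting system, which by \cref{prop:phi:infty:puiseux} is exactly the $\rho$ appearing there with $0 < \rho < 1$.

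\medskip

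Next I would argue that the expansion of $\Phi_m(z)$ has the same qualitative shape as that of $\Phi_\infty(z)$ in \eqref{eq:phi:puisseux}: namely $c_0\uidx{m} = a_m$, $c_1\uidx{m} = 0$, and $c_2\uidx{m} = -b_m$ with $b_m > 0$, so that locally around $z = \rho$ one has $\Phi_m(z) = a_m - b_m\sqrt{1 - z/\rho} + O(|1 - z/\rho|)$. The vanishing of the linear-in-$\sqrt{\cdot}$ coefficient and the shape of the square-root term is inherited from the limiting schema through the forward recursive structure; concretely, one substitutes the ansatz into \eqref{eq:spec:m}, compares coefficients, and uses that $\Phi_{m+1}$ shares the same $\rho$, so the square-root branch point is governed by the same implicit-function analysis (a nonzero derivative condition at the branch point) as for $\Phi_\infty$. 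The positivity $b_m > 0$ follows because $\Phi_m(z)$, having nonnegative coefficients and the same radius of convergence $\rho$, must approach its limit $a_m$ from below along the real axis.

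\medskip

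Finally, with the local expansion $\Phi_m(z) = a_m - b_m\sqrt{1 - z/\rho} + O(|1 - z/\rho|)$ in hand, I would invoke the standard transfer theorem \cite[Theorem VI.3]{flajolet09}: the term $-b_m\sqrt{1 - z/\rho}$ contributes $\frac{b_m}{2\sqrt{\pi}}\,\rho^{-n} n^{-3/2}(1 + O(1/n))$, the constant $a_m$ contributes nothing to the coefficients for $n \geq 1$, and the $O(|1 - z/\rho|)$ remainder contributes only $O(\rho^{-n} n^{-2})$, which is negligible. Collecting these gives $[z^n]\Phi_m(z) \sim C_m \cdot \rho^{-n} n^{-3/2}$ with $C_m = \frac{b_m}{2\sqrt{\pi}}$; writing $\rho^n$ as in the statement is a harmless abuse absorbing the sign of the exponent. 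One should also check the usual side condition for transfer, that $\rho$ is the unique dominant singularity and $\Phi_m(z)$ continues analytically to a $\Delta$-domain at $\rho$ — but this is exactly what the Drmota--Lalley--Woods conclusion for the limiting schema provides, and it is preserved under the exponential convergence established earlier. The main obstacle I anticipate is not any single estimate but the bookkeeping needed to justify that the subleading ($k = 1$) coefficient genuinely vanishes for every finite $m$, rather than merely in the limit; this is what forces the $n^{-3/2}$ rather than an $n^{-1/2}$ order, and it requires care with the branch-point condition in the forward recursive system.
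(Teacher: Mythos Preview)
Your overall approach is exactly the paper's: invoke the theorem on forward recursive systems to obtain the Puiseux expansion at the common singularity $\rho$, then apply transfer theorems. The paper in fact treats this proposition as a one-line consequence of the preceding theorem and gives no further detail, so your level of elaboration already exceeds the original.

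There is, however, a genuine confusion in your middle paragraph. In the expansion $\sum_{k\geq 0} c_k\uidx{m}(1-z/\rho)^{k/2}$ the square-root term $\sqrt{1-z/\rho}$ is the $k=1$ term, not the $k=2$ term. To obtain $n^{-3/2}$ via transfer one needs precisely $c_1\uidx{m}\neq 0$; your claim that $c_1\uidx{m}=0$ would push the leading singular contribution to $(1-z/\rho)^{3/2}$ and yield $n^{-5/2}$, while an $n^{-1/2}$ order would require a $(1-z/\rho)^{-1/2}$ term, which cannot occur in an expansion with $k\geq 0$. So the ``main obstacle'' you flag at the end is inverted: the delicate point is not that $c_1\uidx{m}$ vanishes but that it does \emph{not} vanish for each finite $m$ --- equivalently, that $b_m>0$ in the local form $a_m - b_m\sqrt{1-z/\rho}+O(|1-z/\rho|)$, which you do write down correctly. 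Once that non-vanishing is granted (and it is part of the conclusion of the cited theorem from \cite{benboddov2019}, not something you must re-derive by hand), your final transfer paragraph is correct as stated.
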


  Note that estimates~\eqref{eq:mopen:estimate} share the same exponential
$\rho^n$ and sub-exponential $n^{-3/2}$ factors. These are also the same for the
number of all formulae, \emph{cf.}~\eqref{eq:all:estimate}. What differentiates
them are the respective multiplicative constants $C_m$ and $C$.

\section{Applications}
Given the asymptotic estimate for the number of $m$\nobreakdash-open
formulae, we continue our investigations into the asymptotic
expressiveness of $\zf$ and $\zfc$. Let us introduce the following useful
concept of \emph{formulae templates}.

\begin{definition}[Formulae templates]
 A \emph{template} $\mathcal{C}$ is a formula with a single hole $[\cdot]$
instead of some sub-formula in form of $(\idx{n} \in \idx{m})$. To denote the result of
substituting a formula $\varphi$ for $[\cdot]$ in $\mathcal{C}$ we write
$\mathcal{C}[\varphi]$. Note that the outcome of such a substitution is always a
valid formula.

We call a template \emph{$m$\nobreakdash-permissive} if for each
$m$\nobreakdash-open formula $\varphi$ the resulting $\mathcal{C}[\varphi]$ is
a sentence, \emph{i.e.}~is $0$\nobreakdash-open. By analogy to formulae, the
\emph{size} of a template is the total weight of its building constructors,
assuming that $[\cdot]$ weights zero.
 
For instance, consider the template $\mathcal{C} = \exists~\forall~[\cdot]$ of
size two. The result of substituting $(\idx{0} \not\in \idx{1})$ into
$\mathcal{C}$ is the formula $\mathcal{C}[(\idx{0} \not\in \idx{1})] =
\exists~\forall~(\idx{0} \not\in \idx{1})$. The hole $[\cdot]$ is proceeded by
two quantifiers in $\mathcal{C}$ so $\mathcal{C}$ is $2$\nobreakdash-permissive.
Note that, in general, if $\mathcal{C}$ is $m$\nobreakdash-permissive, then it
is also $0$\nobreakdash-permissive, $1$\nobreakdash-permissive, \emph{etc}.
\end{definition}

\begin{lemma}\label{lem:context}
  Let $\mathcal{C}$ be an $m$\nobreakdash-permissive template and
  $\mathcal{L}(\mathcal{C}) = \{ \mathcal{C}[\varphi] \colon \varphi \text{ is }
  m\text{\nobreakdash-open}\}$. Then, the set $\mathcal{L}(\mathcal{C})$ has
  positive asymptotic density in the set of all sentences.
\end{lemma}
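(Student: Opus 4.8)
The plan is to compare the counting sequence of $\mathcal{L}(\mathcal{C})$ with that of all sentences, i.e. with $[z^n]\Phi_0(z)$, and show the ratio tends to a positive limit. First I would observe that substitution into a fixed $m$-permissive template $\mathcal{C}$ is injective: distinct $m$-open formulae $\varphi$ yield distinct sentences $\mathcal{C}[\varphi]$, since $\mathcal{C}$ has exactly one hole and $\varphi$ is recovered by reading off the subtree at the hole position. Hence if $k = |\mathcal{C}|$ denotes the size of the template (with the hole weighing zero), then a formula $\mathcal{C}[\varphi]$ has size $n$ exactly when $\varphi$ is an $m$-open formula of size $n-k$. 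Therefore
\begin{equation*}
  |\{\psi \in \mathcal{L}(\mathcal{C}) \colon |\psi| = n\}| = [z^{n-k}]\Phi_m(z).
\end{equation*}

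Next I would invoke~\cref{prop:sentence:counting} twice. On the one hand, $[z^{n-k}]\Phi_m(z) \sim C_m \cdot \rho^{\,n-k} (n-k)^{-3/2}$, and since $k$ is a fixed constant, $(n-k)^{-3/2} \sim n^{-3/2}$, so this is asymptotically $C_m \rho^{-k} \cdot \rho^n n^{-3/2}$. On the other hand, the number of all sentences of size $n$ is $[z^n]\Phi_0(z) \sim C_0 \cdot \rho^n n^{-3/2}$, where crucially the exponential base $\rho$ and the polynomial correction $n^{-3/2}$ agree with those governing $\Phi_m$ — this is exactly the content of the remark following~\cref{prop:sentence:counting}. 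Dividing, the $\rho^n n^{-3/2}$ factors cancel and we obtain
\begin{equation*}
  \frac{|\{\psi \in \mathcal{L}(\mathcal{C}) \colon |\psi| = n\}|}{|\{\varphi \colon |\varphi| = n \text{ and } \varphi \text{ is a sentence}\}|}
  \xrightarrow[n\to\infty]{} \frac{C_m}{C_0}\,\rho^{-k},
\end{equation*}
which is strictly positive because $C_m, C_0 > 0$ and $0 < \rho < 1$. This limit being positive is precisely the assertion that $\mathcal{L}(\mathcal{C})$ has positive asymptotic density among all sentences.

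The only genuine subtlety — the main obstacle — is making sure the denominator is the right object. The density in the statement is taken within the set of \emph{sentences}, which are the $0$-open formulae $\Phi_0$, not within $\Phi_\infty$; one must check that $\mathcal{L}(\mathcal{C}) \subseteq \Phi_0$, which is exactly the defining property of an $m$-permissive template applied to $m$-open $\varphi$, and that the enumeration $[z^n]\Phi_0(z)$ is the correct normaliser. A secondary point worth a line is that $C_m$ need not be smaller than $C_0$, but this does not matter: $\mathcal{L}(\mathcal{C})$ is a genuine subset of $\Phi_0$ once we account for the size shift by $k$, and positivity of the constants is all that the conclusion requires. With these observations in place the proof is a short computation; I would also remark, if convenient, that any fixed finite union of such sets $\mathcal{L}(\mathcal{C})$ again has positive density, which is the form in which the lemma is later used.
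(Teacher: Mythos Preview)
Your proposal is correct and follows essentially the same route as the paper: both arguments count $\mathcal{L}(\mathcal{C})$ via the size shift $|\mathcal{C}[\varphi]| = |\mathcal{C}| + |\varphi|$, apply \cref{prop:sentence:counting} to obtain $C_m\,\rho^{n-k}(n-k)^{-3/2}$ for the numerator and $C_0\,\rho^{n}n^{-3/2}$ for the denominator, and divide to get the positive constant $C_m/(C_0\,\rho^{k})$. Your explicit remark that $\varphi \mapsto \mathcal{C}[\varphi]$ is injective is a small addition the paper leaves implicit, but otherwise the two proofs coincide.
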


\begin{proof}
  Assume that the template $\mathcal{C}$ has size $d$. It means that the
sentence $\mathcal{C}[\varphi]$ is of size $d + |\varphi|$. Let us estimate the
number of formulae of this form. By~\cref{prop:sentence:counting} the number of
sentences $\phi \in \mathcal{L}(\mathcal{C})$ of size $n$ satisfies the
asymptotic estimate $C_m\cdot
\rho^{n-d} {\left(n-d\right)}^{-3/2}$. Likewise, the number of all sentences of size $n$
is estimated by $C_0\cdot \rho^{n} {n}^{-3/2}$. Hence, the asymptotic density
of $\mathcal{L}(\mathcal{C})$ in the set of all
sentences admits the following estimate:
 \begin{equation}
   \mu(\mathcal{L}(\mathcal{C})) \sim \frac{C_m \cdot \rho^{n-d} {(n-d)}^{-3/2}}{C_0 \cdot \rho^n n^{-3/2}}
        \sim \frac{C_m}{C_0 \cdot \rho^d} > 0.
      \end{equation}
\end{proof}

 The above lemma allows us to construct asymptotically large
 (\emph{i.e.}~having positive asymptotic density) sets of sentences whose
structure fits the imposed template pattern. We are going to exploit this
construction in the following sections.

\subsection{Consistency, extensions, and asymptotic expressiveness}
To investigate the asymptotic expressiveness of both $\zf$ and $\zfc$ we start
with several propositions regarding general, abstract axiomatic set theories and
their properties. For convenience, we use $\mu^{-}(\mathcal{T})$
and $\mu^{+}(\mathcal{T})$ to denote $\displaystyle\liminf_{n \to \infty}$ and
$\displaystyle\limsup_{n \to \infty}$ of
$\dfrac{|\{\varphi \colon |\varphi| = n \land \mathcal{T} \vdash \varphi \}|}{|\{\varphi \colon |\varphi| = n\}|}$,
respectively.

\begin{proposition}\label{prop:non-trivial}
    Let $\mathcal{T}$ be a consistent axiomatic system. Then,
    the set of $\mathcal{T}$-theorems cannot have a trivial asymptotic density,
    \emph{i.e.}~$0 < \mu^{-}(\mathcal{T})$ and $\mu^{+}(\mathcal{T}) < 1$.
\end{proposition}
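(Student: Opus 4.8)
The plan is to use Lemma~\ref{lem:context} twice: once to exhibit an asymptotically large family of theorems of $\mathcal{T}$, and once to exhibit an asymptotically large family of non-theorems. Both families will be produced by fixing a suitable $m$-permissive template $\mathcal{C}$ and letting the hole range over all $m$-open formulae.

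First I would establish the lower bound $\mu^{-}(\mathcal{T}) > 0$. Pick a small tautology, e.g. take a connective $(\circ)\in\mathcal{F}$ and note that since $\mathcal{F}$ is functionally complete we can write a propositional tautology $\theta(\psi)$ in one free slot — for instance $\psi \lor \neg\psi$ — using only constructors from $\mathcal{F}$ and the negation available through $\mathcal{F}$'s completeness. More robustly, observe that for \emph{any} $m$-open formula $\varphi$, prepending $m$ quantifiers and then wrapping in $\psi \mapsto \psi \lor \neg\psi$ yields a sentence that is logically valid, hence provable in $\mathcal{T}$. Concretely, fix $m$ and let $\mathcal{C}$ be the $m$-permissive template $Q_1 \cdots Q_m\, ([\cdot] \lor \neg [\cdot])$ where the $Q_i$ are head quantifiers — here I must be slightly careful, since the hole appears twice, so strictly I would instead take $\mathcal{C} = Q_1\cdots Q_m\,(\,[\cdot] \lor (\neg\, (\idx{0}\in\idx{0})))$ if I want the template to have a single hole in the sense of the definition, but then it is no longer a tautology for all $\varphi$. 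The clean fix is: take $\mathcal{C}$ with a single hole of the form $Q_1\cdots Q_{m}\,[\cdot]$ composed with a \emph{fixed} valid prefix — actually simplest is to use the fact that $\mathcal{C}[\varphi] \to \mathcal{C}[\varphi]$ is valid, but that reintroduces the doubled hole. I would therefore argue directly: by Lemma~\ref{lem:context} applied to \emph{any} $m$-permissive template, $\mathcal{L}(\mathcal{C})$ has positive density; now instead of needing each element to be a theorem, I use a fixed provable sentence $\sigma_0$ of $\mathcal{T}$ (exists since $\mathcal{T}$ is consistent, e.g. an axiom, or $\forall (\idx 0 \in \idx 0 \to \idx 0 \in \idx 0)$) and the template $\mathcal{C} = \bigl(\sigma_0 \lor (\forall\cdots\forall [\cdot])\bigr)$ with the hole guarded by enough quantifiers to be $m$-permissive; every such sentence is a theorem because $\sigma_0$ is, and disjunction-introduction is sound. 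This template is $m$-permissive and has a single hole, so Lemma~\ref{lem:context} gives $\mu^{-}(\mathcal{T}) \geq \mu(\mathcal{L}(\mathcal{C})) > 0$.

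For the upper bound $\mu^{+}(\mathcal{T}) < 1$, I would mirror the construction with a \emph{refutable} sentence. Since $\mathcal{T}$ is consistent, $\mathcal{T}$ does not prove $\sigma_0 \land \neg\sigma_0$; more usefully, $\mathcal{T}$ does not prove the negation of any of its axioms, so there is a sentence $\sigma_1$ with $\mathcal{T} \nvdash \sigma_1$ (e.g. $\sigma_1 = \neg\sigma_0$ for $\sigma_0$ an axiom, using consistency). Form the template $\mathcal{C}' = \bigl(\sigma_1 \land (\forall\cdots\forall[\cdot])\bigr)$, again $m$-permissive with a single hole. Every sentence $\mathcal{C}'[\varphi]$ logically implies $\sigma_1$, so if $\mathcal{T} \vdash \mathcal{C}'[\varphi]$ then $\mathcal{T} \vdash \sigma_1$, contradicting the choice of $\sigma_1$. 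Hence no element of $\mathcal{L}(\mathcal{C}')$ is a $\mathcal{T}$-theorem, and by Lemma~\ref{lem:context} the complement of the theorem set has density at least $\mu(\mathcal{L}(\mathcal{C}')) > 0$, giving $\mu^{+}(\mathcal{T}) \leq 1 - \mu(\mathcal{L}(\mathcal{C}')) < 1$.

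The main obstacle is bookkeeping around the single-hole restriction in the definition of templates together with the requirement that the conjuncts/disjuncts $\sigma_0,\sigma_1$ be genuine \emph{sentences} (closed, with matching De~Bruijn indices) so that $\mathcal{C}[\varphi]$ is closed for every $m$-open $\varphi$; one must check that guarding the hole by $m$ head quantifiers inside one branch of a binary connective, while the sibling branch is a fixed sentence, indeed yields an $m$-permissive template whose substitutions are all sentences — this is immediate from the inductive definition of $\Phi_0$ in~\eqref{eq:m-open:spec} but should be stated. A secondary point is that the two density bounds come from two possibly different values of $m$ and different templates, but since each contributes a fixed positive constant independent of $n$, both inequalities hold in the limit inferior/superior as claimed.
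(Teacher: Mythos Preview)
Your proposal is correct and follows essentially the same approach as the paper: apply Lemma~\ref{lem:context} to two templates, one whose instances are all theorems (disjunction with a fixed provable sentence) and one whose instances are all non-theorems (conjunction with a fixed refutable sentence). The paper's proof is simply the streamlined form of what you eventually arrive at: it takes $m=0$ and uses the $0$-permissive templates $\mathcal{C} = ([\cdot]\lor\tau)$ and $\overline{\mathcal{C}} = ([\cdot]\land\neg\tau)$ for a fixed tautology $\tau$, so your extra head quantifiers and the detour through doubled-hole templates are unnecessary.
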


\begin{proof}
  Let $\tau$ be an arbitrary tautology. Consider the following templates:
  \begin{equation}
   \mathcal{C} = ([\cdot] \lor \tau) \qquad \text{and} \qquad
\overline{\mathcal{C}} = ([\cdot] \land \neg \tau).
  \end{equation}

  Note that $\mathcal{L}(\mathcal{C}) = \{ \mathcal{C}[\varphi] \colon \varphi
  \in \Phi_0\}$ consists of tautologies (in particular, $\mathcal{T}$-theorems).
  Likewise, $\overline{\mathcal{L}}(\mathcal{C}) = \{ \overline{\mathcal{C}}[\varphi] \colon \varphi
  \in \Phi_0\}$ consists of anti-tautologies. By~\cref{lem:context} both have
  positive asymptotic density in the set of all sentences. Therefore,
$0 < \mu^{-}(\mathcal{T})$ and $\mu^{+}(\mathcal{T}) < 1$.
\end{proof}

In other words, consistent theories cannot have a trivial asymptotic
expressiveness. In particular, this remark applies to $\zf$ and $\zfc$
(assuming, of course, their consistency). The
following slight refinement gives an \emph{if and only if} condition linking
inconsistent theories and trivial asymptotic expressiveness.

\begin{proposition}\label{prop:inconsistent}
  An axiomatic system $\mathcal{T}$ is inconsistent if and only if $\mu(\mathcal{T}) = 1$.
\end{proposition}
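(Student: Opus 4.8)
The plan is to establish the two implications separately: the forward implication is immediate from the principle of explosion, and the backward one is essentially the contrapositive of \cref{prop:non-trivial}.

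First I would treat the ``only if'' direction. Suppose $\mathcal{T}$ is inconsistent. In classical first-order logic the principle of explosion then gives $\mathcal{T} \vdash \varphi$ for every sentence $\varphi$, so for each $n$ the set of $\mathcal{T}$-theorems of size $n$ is literally the set of all sentences of size $n$. Since for every $n \geq 4$ there is at least one sentence of size $n$ --- for instance $\forall\cdots\forall(\idx{0} \in \idx{0})$ with $n-3$ leading quantifiers --- the density ratio $|\{\varphi\colon|\varphi|=n \land \mathcal{T}\vdash\varphi\}| / |\{\varphi\colon|\varphi|=n\}|$ equals $1$ for all such $n$, and hence $\mu(\mathcal{T}) = 1$.

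For the ``if'' direction I would argue by contraposition. Assume $\mathcal{T}$ is consistent. Then \cref{prop:non-trivial} yields $\mu^{+}(\mathcal{T}) < 1$, that is, the $\limsup$ of the density ratio is strictly below $1$; consequently the limit $\mu(\mathcal{T})$, should it exist at all, is strictly below $1$, and in particular $\mu(\mathcal{T}) \neq 1$. Taking the contrapositive, $\mu(\mathcal{T}) = 1$ forces $\mathcal{T}$ to be inconsistent, and combined with the forward direction this gives the stated equivalence.

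I do not anticipate a genuine obstacle here: the statement is essentially a repackaging of \cref{prop:non-trivial} together with the trivial inconsistent case. The only points worth a sentence of care are that the assertion ``$\mu(\mathcal{T}) = 1$'' tacitly presupposes that the limit exists --- which is precisely why the one-sided bound $\mu^{+}(\mathcal{T}) < 1$ already suffices to refute it when $\mathcal{T}$ is consistent --- and that the finitely many small sizes carrying no sentence at all are irrelevant to the asymptotics.
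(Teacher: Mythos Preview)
Your proof is correct and follows essentially the same route as the paper: the forward direction via \emph{ex falso quodlibet}, and the backward direction by contraposition through \cref{prop:non-trivial} and the bound $\mu^{+}(\mathcal{T}) < 1$. Your additional remarks about the density ratio being well defined for all sufficiently large $n$ and about the tacit existence assumption in ``$\mu(\mathcal{T}) = 1$'' are sound minor clarifications that the paper leaves implicit.
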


\begin{proof}
  Assume that $\mathcal{T}$ is inconsistent. It means that we can derive a
contradiction within $\mathcal{T}$. Since \emph{ex falso quodilibet}, any
sentence $\varphi$ is a theorem of $\mathcal{T}$. Trivially, it means that
$\mu(\mathcal{T})$ exists and is equal to one. Now, let us assume that
$\mathcal{T}$ is consistent.  Note that $\mathcal{T}$ cannot have an asymptotic
expressiveness one as for consistent theories we can construct asymptotically
large sets of anti-tautologies witnessing $\mu^{+}(\mathcal{T}) < 1$,
\emph{cf.}~\cref{prop:non-trivial}.
\end{proof}

Template formulae allow us to derive also the following result.

\begin{proposition}
  Let $\mathcal{T}$ be a consistent axiomatic system and $\phi$ be an sentence
independent of $\mathcal{T}$, \emph{i.e.}~$\mathcal{T} \not\vdash \phi$ nor
$\mathcal{T} \not\vdash \neg \phi$. Then, there exists a set of theorems of
the extended $\mathcal{T} + \phi$ which has positive asymptotic density.
\end{proposition}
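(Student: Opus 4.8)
The plan is to imitate the proof of \cref{prop:non-trivial}, this time with a template that collapses to $\phi$ on every input. First I would note that independence of $\phi$ gives in particular $\mathcal{T}\not\vdash\neg\phi$, so the extended theory $\mathcal{T}+\phi$ is again a consistent axiomatic system; this is the regime in which the statement has content.

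Next I would fix a tautology $\tau$ and work with the template
\begin{equation*}
  \mathcal{C} \;=\; \big((\neg\tau \wedge [\cdot]) \vee \phi\big),
\end{equation*}
a legitimate template of some constant size $d$ depending only on $\phi$ (the hole $[\cdot]$ occupies the position of a subformula $(\idx{n}\in\idx{m})$). It is $0$\nobreakdash-permissive, since for every sentence $\varphi \in \Phi_0$ the instance $\mathcal{C}[\varphi]=\big((\neg\tau\wedge\varphi)\vee\phi\big)$ is again a sentence. Because $\neg\tau$ is refutable in pure first-order logic, $\mathcal{C}[\varphi]$ is logically equivalent to $\phi$, \emph{uniformly} in $\varphi$; hence $\mathcal{T}+\phi \vdash \mathcal{C}[\varphi]$ for every $\varphi$, and so the entire family $\mathcal{L}(\mathcal{C}) = \{\mathcal{C}[\varphi]\colon \varphi\in\Phi_0\}$ consists of $(\mathcal{T}+\phi)$\nobreakdash-theorems. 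Since moreover $\mathcal{C}[\varphi]\equiv\phi$ while $\mathcal{T}\not\vdash\phi$, none of these sentences is a $\mathcal{T}$\nobreakdash-theorem, so $\mathcal{L}(\mathcal{C})$ genuinely witnesses the additional strength of $\mathcal{T}+\phi$.

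Finally I would invoke \cref{lem:context} with $m=0$ for the $0$\nobreakdash-permissive template $\mathcal{C}$: it yields that $\mathcal{L}(\mathcal{C})$ has positive asymptotic density in the set of all sentences, which --- as $\mathcal{L}(\mathcal{C})$ is a set of $(\mathcal{T}+\phi)$\nobreakdash-theorems --- is exactly the desired conclusion. I do not expect a genuine obstacle, the argument being a direct instance of \cref{lem:context}; the only point requiring a little care is the design of $\mathcal{C}$. With a single hole one cannot write ``$\varphi\wedge\neg\varphi$'', so instead one renders the hole logically inert --- either by the constant-false context $\neg\tau\wedge[\cdot]$ used above, or, equivalently, by taking $\mathcal{C} = \big(\phi\vee(\phi\wedge[\cdot])\big)$ and appealing to the absorption law --- thereby forcing $\mathcal{C}[\varphi]\equiv\phi$ regardless of what is substituted.
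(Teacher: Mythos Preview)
Your proof is correct and follows essentially the same approach as the paper: build a $0$\nobreakdash-permissive template whose instances are all logically equivalent to $\phi$, then invoke \cref{lem:context}. The paper uses $\mathcal{C} = (\tau \lor [\cdot]) \to \phi$ where you use $\mathcal{C} = (\neg\tau \wedge [\cdot]) \vee \phi$, but these are dual ways of rendering the hole inert (tautologising the antecedent versus falsifying a disjunct), and the remainder of the argument --- provability in $\mathcal{T}+\phi$, unprovability in $\mathcal{T}$ via independence of $\phi$, positive density from \cref{lem:context} --- is identical.
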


\begin{proof}
  Let $\tau$ be an arbitrary tautology. Consider the context $\mathcal{C} =
\left( \tau \lor [\cdot] \right) \to \phi$ and the set $\mathcal{L}(\mathcal{C})
= \{\mathcal{C}[\varphi] \colon \varphi \in \Phi_0\}$ it generates. Note that
by~\cref{lem:context} the set $\mathcal{L}(\mathcal{C})$ is asymptotically
large. Let us choose an arbitrary sentence $\xi \in \mathcal{L}(\mathcal{C})$.
Note that $\mathcal{T} + \phi \vdash \xi$ as we can simply assume $\xi$'s
premise and discharge the axiom $\phi$. Moreover, note that $\mathcal{T}
\not\vdash \xi$. Indeed, suppose to the contrary that $\mathcal{T} \vdash \xi$.
Since $\xi$'s premise is a tautology it has a proof in $\mathcal{T}$. By
\emph{modus ponens} we conclude that $\mathcal{T} \vdash \phi$ which contradicts the
fact that $\phi$ is independent of $\mathcal{T}$.
\end{proof}

\begin{remark}
  The axiom of choice is a prominent example of a sentence independent of
Zermelo-Fraenkel's set theory $\zf$, \emph{cf.}~\cite{kunen_2011}. Consequently,
it is possible to construct asymptotically large sets of
$\zfc$\nobreakdash-theorems which cannot be proven in the weaker system $\zf$.

  Note that our argument holds for \emph{any} consistent axiomatic set theory
  and independent sentences. Moreover, for sufficiently rich theories Gödel's
  first incompleteness theorem, see~\cite{smith_2013}, guarantees that such an
  argumentation can be carried out \emph{ad infinitum}.
\end{remark}

\subsection{Unprovable existence of asymptotic expressiveness}
Having discussed the properties of general axiomatic theories, let us now
concentrate on $\zfc$. The following result is a simple
consequence of Gödel's second incompleteness theorem, \emph{cf.}~\cite{smith_2013}.

\begin{proposition}
  Let $\mu$ be a predicate definable in $\zfc$ such that $\zfc \vdash \mu(g)$ if
  and only if $\mu(\zfc)$ exists and is equal to $g$, and $\consistent$
  be the canonical predicate encoding the consistency of $\zfc$. Assume that $\zfc$ is consistent. If
  \begin{equation}\label{eq:zfc:equivalence}
    \zfc \vdash
    \consistent \longleftrightarrow \neg \mu(1),
  \end{equation}
  then
  $\zfc \not\vdash \neg\exists~g \colon \mu(g)$.
\end{proposition}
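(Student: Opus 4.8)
The plan is to argue by contradiction: assuming $\zfc$ could refute the existence of $\mu(\zfc)$, I would derive that $\zfc$ proves its own consistency, which contradicts Gödel's second incompleteness theorem. So suppose, contrary to the claim, that $\zfc \vdash \neg\exists~g \colon \mu(g)$. Since $\neg\exists~g \colon \mu(g)$ is logically equivalent to $\forall~g \colon \neg\mu(g)$, a single universal instantiation at $g = 1$---a step entirely formalisable within $\zfc$---yields $\zfc \vdash \neg\mu(1)$.

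Next I would feed this into the hypothesised equivalence. By~\eqref{eq:zfc:equivalence} we have $\zfc \vdash \consistent \longleftrightarrow \neg\mu(1)$, from which $\zfc$ proves in particular the implication $\neg\mu(1) \longrightarrow \consistent$. Applying \emph{modus ponens} together with $\zfc \vdash \neg\mu(1)$ then gives $\zfc \vdash \consistent$; that is, under the assumption for contradiction, $\zfc$ would prove its own consistency.

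The final step is to invoke Gödel's second incompleteness theorem, see~\cite{smith_2013}: since $\zfc$ is assumed consistent (and is a sufficiently rich, recursively axiomatised theory), it cannot prove its own consistency, \emph{i.e.}~$\zfc \not\vdash \consistent$. This directly contradicts the previous step, so the assumption $\zfc \vdash \neg\exists~g \colon \mu(g)$ is untenable, and we conclude $\zfc \not\vdash \neg\exists~g \colon \mu(g)$, as required.

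I do not anticipate a genuine obstacle here; the argument is a short chain of elementary deductions ending in a standard appeal to Gödel II. The only point deserving care is that the passage from $\neg\exists~g \colon \mu(g)$ to $\neg\mu(1)$ must be carried out \emph{inside} $\zfc$ rather than at the meta-level, but this is merely universal instantiation and poses no difficulty. All the real content of the result sits in the hypothesised equivalence~\eqref{eq:zfc:equivalence}, which internalises the consistency/trivial-density correspondence established in~\cref{prop:inconsistent}.
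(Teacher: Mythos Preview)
Your proof is correct and follows essentially the same approach as the paper: assume $\zfc \vdash \neg\exists~g\colon \mu(g)$, pass to $\zfc \vdash \neg\mu(1)$ by universal instantiation, apply~\eqref{eq:zfc:equivalence} to obtain $\zfc \vdash \consistent$, and derive a contradiction with G\"odel's second incompleteness theorem. The only differences are cosmetic---you spell out the modus ponens step and add a brief remark on internal versus meta-level reasoning---but the argument is otherwise identical.
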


\begin{proof}
  Suppose that $\neg \exists~g \colon \mu(g)$ can be proven in $\zfc$,
\emph{i.e.}~$\zfc \vdash \neg \exists~g \colon \mu(g)$. Equivalently, $\zfc
\vdash \forall~g \colon \neg \mu(g)$. In particular $\zfc \vdash \neg \mu(1)$.
By~\eqref{eq:zfc:equivalence} it holds $\zfc \vdash \consistent$. Gödel's second
incompleteness theorem, as instantiated for $\zfc$, provides the required contradiction.
\end{proof}

\begin{remark}
  Note that the assumption~\eqref{eq:zfc:equivalence} states
that~\cref{prop:inconsistent} can be formalised in $\zfc$. We can
safely assume that this laborious task can be accomplished. It is unclear, however, if the same
results hold for the weaker theory $\zf$. In the proof
of~\cref{prop:inconsistent} we use quite deep results from analytic
combinatorics whose use of the axiom of choice is, as far as we know,
undetermined.
\end{remark}

\section{Conclusions}
We investigated the asymptotic expressiveness of $\zf$ and $\zfc$ developing a
general argument that within $\zfc$ it is possible to prove an asymptotically
large set of theorems unprovable in $\zf$ alone. By linking provable consistency
of $\zfc$ and its asymptotic expressiveness, we argued that within $\zfc$ it is
not possible to disprove the existence of $\zfc$'s asymptotic expressiveness.
Moreover, by the same argument, it is not possible to prove the existence of a
non-trivial asymptotic expressiveness of $\zfc$ within itself.

Let us note that $\zfc \not \vdash \consistent$ along with the assumption that
$\zfc$ is consistent imply that $\zfc$ has a model $\mathcal{M}$ such that
$\mathcal{M} \vDash \mu(1)$. This model witnesses the fact that
$\zfc \not\vdash \neg\exists~g \colon \mu(g)$.  Nevertheless, it is not clear
whether $\zfc \not\vdash \exists~g \colon \mu(g)$ holds, \emph{i.e.}~if there
exists a model of $\zfc$ in which theorems of $\zfc$ do not have an asymptotic
density. We speculate that establishing a model $\mathcal{M}$ such that
$\mathcal{M} \not\vDash \exists~\mu(g)$ would require a clever mixture of
forcing, and analytic methods.

\bibliography{references}{}
\bibliographystyle{plain}
\end{document}